\documentclass{amsart}
\usepackage{amssymb,amsmath,latexsym}
\usepackage{amsthm}
\usepackage{fontenc}
\usepackage{amssymb}
% Page length commands go here in the preamble
\numberwithin{equation}{section}

\newtheorem{theorem}{Theorem}[section]

\newtheorem{proposition}[theorem]{Proposition}

 % 1.5 denotes double spacing. Changing it will change the spacing

\setlength{\parindent}{0in} 
\begin{document}
\author{Alexander E Patkowski}
\title{On some instances of Fox's Integral Equation}

\maketitle
\begin{abstract}We consider some applications of the non-homogeneous second order integral equation of Fox. Some new
solutions to Fox's integral equation are discussed in relation to number theory.  \end{abstract}

% AMS keywords (used in AMS journals)
\keywords{\it Keywords: \rm Fourier Integrals; Fox's Integral Equation; Riemann Prime Counting Function}

% AMS subject classifications (used in AMS journals)
\subjclass{ \it 2010 Mathematics Subject Classification 42A38, 11F20.}

\section{Introduction}
The Fredholm non-homogeneous second-order integral equation has the form
\begin{equation} \Delta(x)=f(x)+\int_{0}^{\infty}k(x,t)\Delta(t)dt,\end{equation}
where $0<x<\infty.$ Integral equations of this form are known to have many applications, particularly in boundary value problems. Fox [1] studied the special case $k(x,t)=k(xt),$ and noted that
a simple method to solve (1.1) would be to employ Mellin transforms. The general (formal) solution
is given by 
\begin{equation}\Delta(x)=\frac{1}{2\pi i}\int_{c-i\infty}^{c+i\infty}\frac{\bar{f}(s)+\bar{k}(s)\bar{f}(1-s)}{1-\bar{k}(s)\bar{k}(1-s)}x^{-s}ds,\end{equation}
where the bar signifies the Mellin transform of a suitable function $f$
\begin{equation}\bar{f}(s):=\int_{0}^{\infty}t^{s-1}f(t)dt.\end{equation} Here we must assume that each of the Mellin transforms in (1.2) exist. 
The kernels $k(x,t)=\dot{k}(xt):=\sin(xt),$ and $k(x,t)=\ddot{k}(xt):=\cos(xt),$ correspond to the theory of Fourier sine and cosine transforms respectively [4]. In
fact, the reader that is familiar with such transforms might easily recognize solutions to (1.1) from tables [2]. Indeed, the Fourier
sine transform of $(e^{2\pi t}-1)^{-1},$ 
is given by 
\begin{equation} \int_{0}^{\infty}\frac{\sin(wt)dt}{e^{2\pi t}-1}=\frac{1}{2}\left(\frac{1}{e^{w}-1}+\frac{1}{2}-\frac{1}{w}\right).\end{equation}
Hence, a formal solution to the non-homogeneous second order integral equation
\begin{equation} \Delta(x)=f(x)+\int_{0}^{\infty}\sin(xt)\Delta(2\pi t)dt,\end{equation}
where $$f(x)=\frac{1}{2}\left(\frac{1}{2}-\frac{1}{x}\right)$$ is 
\begin{equation}\Delta(x)=\frac{1}{2}\frac{1}{e^{x}-1}.\end{equation} Instances of this sort appear to be ubiquitous. Furthermore, 
solutions to the case $f(x)=0$ in (1.1), which gives the first-order homogeneous equation, are essentially Fourier pairs when we choose the kernels
$\dot{k}(xt),$ $\ddot{k}(xt).$ From the above observations we may make the following statement. Let $\zeta(s)=\sum_{n\ge1}n^{-s}$ denote the Riemann zeta function [6].

\begin{proposition} Put $\dot{\gamma}(x):=\frac{\bar{h}(1)}{x}-\frac{h(0+)}{2},$ where $\bar{g}$ is the Mellin transform of $g,$ and $g$ is bounded at the origin. If $\Delta(x)=\sum_{n\ge1}h(nx),$ then the non-homogeneous second-order integral equation
\begin{equation} \Delta(x)=\dot{\gamma}(x)+\int_{0}^{\infty}k(xt)\Delta(2\pi t)dt,\end{equation}
may be transformed into the functional equation
\begin{equation}\bar{h}(s)\zeta(s)=(2\pi)^{s-1}\bar{k}(s)\zeta(1-s)\bar{h}(1-s).\end{equation} Furthermore, if $k(x)=2\sin(x)$ then
\begin{equation}\bar{h}(s)\zeta(s)=2(2\pi)^{s-1}\Gamma(s)\sin(\frac{\pi}{2}s)\zeta(1-s)\bar{h}(1-s).\end{equation}
\end{proposition}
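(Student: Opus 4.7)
My strategy is to apply the Mellin transform to both sides of (1.7) and read off the functional equation directly. First I would compute the Mellin transform of the left-hand side: interchanging summation with integration (justified in a suitable vertical strip) and substituting $u=nx$ gives
\[
\int_{0}^{\infty}x^{s-1}\sum_{n\ge 1}h(nx)\,dx=\sum_{n\ge 1}n^{-s}\bar{h}(s)=\zeta(s)\bar{h}(s).
\]
Next, for the integral term on the right, I would apply Fubini and then the substitutions $u=xt$ (with $t$ fixed) and $v=2\pi t$, yielding
\[
\int_{0}^{\infty}x^{s-1}\!\int_{0}^{\infty}k(xt)\Delta(2\pi t)\,dt\,dx=\bar{k}(s)\int_{0}^{\infty}t^{-s}\Delta(2\pi t)\,dt=(2\pi)^{s-1}\bar{k}(s)\bar{\Delta}(1-s),
\]
and since $\bar{\Delta}(1-s)=\zeta(1-s)\bar{h}(1-s)$, this is exactly the right side of (1.8).

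The subtle point, and what I anticipate will be the main obstacle, concerns $\dot{\gamma}(x)=\bar{h}(1)/x-h(0+)/2$: neither $1/x$ nor the constant $h(0+)/2$ has a Mellin transform in the classical sense. The correct interpretation is via the pole structure of $\zeta(s)\bar{h}(s)$. Under the bounded-at-origin assumption, $\bar{h}$ has a simple pole at $s=0$ with residue $h(0+)$, while $\zeta$ has a simple pole at $s=1$ with residue $1$; using $\zeta(0)=-1/2$, the product $\zeta(s)\bar{h}(s)$ has residues $\bar{h}(1)$ at $s=1$ and $-h(0+)/2$ at $s=0$. Mellin inversion together with contour shifting then identifies $\dot{\gamma}(x)$ as the exact contribution of these two poles to $\Delta(x)$; equivalently, $\Delta(x)-\dot{\gamma}(x)$ is the function whose Mellin transform is regular on a strip containing $[0,1]$. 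Taking Mellin transforms of $\Delta(x)-\dot{\gamma}(x)$ on the left and of the integral term on the right, both sides coincide analytically, which delivers (1.8).

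For (1.9), I would specialise to $k(x)=2\sin(x)$, for which the classical formula $\int_{0}^{\infty}t^{s-1}\sin(t)\,dt=\Gamma(s)\sin(\pi s/2)$ (valid for $0<\Re(s)<1$) gives $\bar{k}(s)=2\Gamma(s)\sin(\pi s/2)$; substitution into (1.8) produces (1.9) immediately. The analytic continuation of the resulting identity to all $s$ follows from the meromorphy of $\Gamma$, $\zeta$, and (under mild decay hypotheses on $h$) of $\bar{h}$.
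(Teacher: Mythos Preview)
Your proposal is correct and follows essentially the same route as the paper: rearrange (1.7) as $\Delta(x)-\dot{\gamma}(x)=\int_0^\infty k(xt)\Delta(2\pi t)\,dt$, take Mellin transforms of both sides in a strip just to the left of the imaginary axis, and then specialise to $k(x)=2\sin(x)$ via the classical Mellin transform of the sine. The only difference is cosmetic: where you justify that the Mellin transform of $\Delta(x)-\dot{\gamma}(x)$ equals $\bar{h}(s)\zeta(s)$ in that strip by an explicit contour-shift/residue argument (the poles of $\zeta(s)\bar{h}(s)$ at $s=1$ and $s=0$ producing exactly the terms $\bar{h}(1)/x$ and $-h(0+)/2$), the paper simply quotes this as a known Mellin--Barnes identity from Paris--Kaminski.
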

\begin{proof} If we assume $\dot{\gamma}(x)$ is as defined in the proposition, and the solution may be written as the 
series $\Delta(x)=\sum_{n\ge1}h(nx),$ then we may write (for $-\epsilon<c<0,$ $\epsilon>0$)
\begin{equation}\sum_{n\ge1}h(nx)-\frac{\bar{h}(1)}{x}+\frac{h(0+)}{2}=\frac{1}{2\pi i}\int_{c-i\infty}^{c+i\infty}\bar{h}(s)\zeta(s)x^{-s}ds.\end{equation}
by [4, pg.118, eq.(4.1.3)]. Taking Mellin transforms of both sides of (1.10) over the appropriate region gives us
\begin{equation}\int_{0}^{\infty}t^{s-1}\left(\sum_{n\ge1}h(nx)-\frac{\bar{h}(1)}{x}+\frac{h(0+)}{2}\right)dt=\bar{h}(s)\zeta(s),\end{equation}
for $-\epsilon<\Re(s)<0.$ Therefore, after rearranging (1.7), taking Mellin transforms and invoking the series for $\Delta(x),$ we may 
obtain the proposition. In the case of (1.9), we use the known Mellin transform 
\begin{equation}\int_{0}^{\infty}t^{s-1}\sin(xt)dt=\frac{\Gamma(s)\sin(\frac{\pi}{2}s)}{x^s},\end{equation}
where $-1< \Re(s)<1.$
\end{proof}

Note that putting $h(x)=e^{-x}$ in Proposition 1.1 implies (1.4) may be transformed into the functional equation for the Riemann zeta function [6]:
\begin{equation}\zeta(s)=2(2\pi)^{s-1}\Gamma(1-s)\sin(\frac{\pi}{2}s)\zeta(1-s).\end{equation} Appropriate modifications may be made to (1.7)
in the proposition to work with different kernels in the functional equation (1.8). This idea would of course be used in studying other Dirichlet series.

\section{A slightly modified Fox integral equation}
Here we consider our main object of study and re-write the Fox equation as 
\begin{equation} \pi\Delta(a x)=-f(x)+\int_{0}^{\infty}k(xt)\Delta(t)dt,\end{equation}
$a\in\mathbb{R},$ and put $k(x)=\sin(x).$ In this case taking Mellin transforms gives us
\begin{equation} \pi a^{-s}\bar{\Delta}(s)=-\bar{f}(s)+\Gamma(s)\sin(\frac{\pi}{2}s)\bar{\Delta}(1-s).\end{equation}
Replacing $s$ by $1-s$ and applying some standard computations toward the formal solution (1.2), we have that
\begin{equation} \bar{\Delta}(s)=\frac{-\pi\bar{f}(s)}{\pi^2a^{-s}-a^{1-s}\frac{\pi}{2}}-\frac{a^{1-s}}{\pi^2a^{-s}-a^{1-s}\frac{\pi}{2}}\bar{k}(s)\bar{f}(1-s).\end{equation}
Hence,
\begin{equation} \Delta(x)=\frac{-\pi f(x/a)}{\pi^2-a\frac{\pi}{2}}-\frac{a}{\pi^2-a\frac{\pi}{2}}\int_{0}^{\infty}\sin(xt)f(t)dt.\end{equation}
Inserting (2.4) into (2.1) would give

\begin{align}\pi\Delta(a x)
&=-f(x)+\int_{0}^{\infty}k(xt)\Delta(t)dt \\ 
&=-f(x)+\int_{0}^{\infty}\sin(xt)\left(\frac{-\pi f(t/a)}{\pi^2-a\frac{\pi}{2}}-\frac{a}{\pi^2-a\frac{\pi}{2}}\int_{0}^{\infty}\sin(yt)f(y)dy\right)dt\\
&=-f(x)-\frac{\pi}{\pi^2-a\frac{\pi}{2}}\int_{0}^{\infty}\sin(xt)f(t/a)dt-\frac{a}{\pi^2-a\frac{\pi}{2}}\int_{0}^{\infty}\int_{0}^{\infty}\sin(xt)\sin(yt)f(y)dydt\\
&=-f(x)-\frac{\pi}{\pi^2-a\frac{\pi}{2}}\int_{0}^{\infty}\sin(xt)f(t/a)dt-\frac{a\frac{\pi}{2}}{\pi^2-a\frac{\pi}{2}}f(x)\\
&=-f(x)-\frac{\pi a}{\pi^2-a\frac{\pi}{2}}\int_{0}^{\infty}\sin(axt)f(t)dt-\left(\frac{\pi^2}{\pi^2-a\frac{\pi}{2}}-1\right)f(x)\\
&=\frac{-\pi a}{\pi^2-a\frac{\pi}{2}}\int_{0}^{\infty}\sin(axt)f(t)dt-\frac{\pi^2}{\pi^2-a\frac{\pi}{2}}f(x).
\end{align}
Now replacing $x$ by $x/a,$ and dividing both sides by $\pi$ in (2.10) gives (2.4). Therefore (2.4) is a formal solution to (2.1). However, it should 
be noted that the solution has a singularity at $a=2\pi,$ and so a different treatment appears to be warranted in this case. As it turns out (2.1) when $a=2\pi$ has some relevance to number theory. From [7] we find the interesting expansion attributed to J$\ddot{o}$rg Waldvogel valid for $t>0$
\begin{equation} R(e^{-2\pi t})=\frac{1}{\pi}\sum_{n\ge1}\frac{(-1)^{n-1}t^{-2n-1}}{(2n+1)\zeta(2n+1)}+\frac{1}{2}\sum_{\rho}\frac{t^{-\rho}}{\rho \cos(\pi  \rho/2) \zeta'(\rho)},\end{equation} where $\mu(n)$ is the M$\ddot{o}$bius function. The sum over $\rho$ is taken over all the non-real zero's of $\zeta(s).$
Here $R(x)$ is Riemann's approximation to the prime counting function (which counts the number of prime numbers $\le x$), typically written in the form
$$R(x)=\sum_{n\ge1}\frac{\mu(n)}{n}L(x^{1/n}),$$ where $L(x)$ is the logarithmic integral. We write (for $x>0$)
\begin{align} \int_{0}^{\infty}\sin(xt)\sum_{n\ge1}\frac{\mu(n)}{n}e^{-t/n}dt
&= \sum_{n\ge1}\frac{\mu(n)}{n}\frac{x}{x^2+1/n^2}\\ 
&=\frac{1}{x}\sum_{n\ge1}\frac{\mu(n)}{n}\sum_{k\ge0}(xn)^{-2k}(-1)^k\\
&=\frac{1}{x}\sum_{n\ge0}\frac{(-1)^nx^{-2n}}{\zeta(2n+1)}\\
&=-x\frac{d}{dx}\left(\frac{\pi}{2}\sum_{\rho}\frac{x^{-\rho}}{\rho \cos(\pi  \rho/2) \zeta'(\rho)}-\pi R(e^{-2\pi x})\right).
\end{align}
Here we note for (2.14) that $1/\zeta(1)=0.$ We therefore have the following solution.
\begin{theorem} A solution to the second-order Fredholm integral equation (2.1) when $a=2\pi,$ and
\begin{equation}f(x)=\frac{\pi}{2}\sum_{\rho}\frac{x^{-\rho}}{\cos(\pi  \rho/2) \zeta'(\rho)},\end{equation}
is given by \begin{equation}\Delta(x)=\sum_{n\ge1}\frac{\mu(n)}{n}e^{-x/n}.\end{equation}
\end{theorem}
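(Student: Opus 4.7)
The plan is to verify the solution by direct substitution into (2.1) with $a=2\pi$, reading backwards through the chain of identities (2.11)--(2.15) that has already been laid out. Taking $\Delta(t)=\sum_{n\ge 1}\frac{\mu(n)}{n}e^{-t/n}$ and formally interchanging summation with the sine integral gives $\int_0^\infty \sin(xt)\Delta(t)\,dt = \sum_{n\ge 1}\frac{\mu(n)}{n}\cdot \frac{x}{x^2+1/n^{2}}$, i.e.\ line (2.11). Expanding the geometric series in $1/(xn)^2$ and swapping the order of summation (line (2.12)) collects terms by powers of $x$, producing the Dirichlet-type expansion $\frac{1}{x}\sum_{n\ge 0}\frac{(-1)^{n}x^{-2n}}{\zeta(2n+1)}$ displayed in (2.13), where the $n=0$ term drops out because $1/\zeta(1)=0$.

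The next step is to identify this power series with the derivative appearing in (2.15). Waldvogel's expansion (2.10), valid for $x>0$, expresses $R(e^{-2\pi x})$ as the sum of precisely this power series (up to the factor $1/\pi$) plus the half sum over the non-real zeros $\rho$ of $\zeta(s)$. Applying $-x\,\tfrac{d}{dx}$ to both sides of (2.10) and moving the differentiation inside the two sums yields (2.15). I would then evaluate the two pieces on the right-hand side of (2.15) separately. For the zero sum, the factor of $\rho$ in the denominator is cancelled by term-by-term differentiation of $x^{-\rho}$, and the resulting quantity $-x\cdot\frac{d}{dx}\bigl(\frac{\pi}{2}\sum_{\rho}\frac{x^{-\rho}}{\rho\cos(\pi\rho/2)\zeta'(\rho)}\bigr)$ is exactly $f(x)$ as defined in the theorem. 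For the $R$ term, differentiating $R(e^{-2\pi x})=\sum_{n\ge 1}\frac{\mu(n)}{n}L(e^{-2\pi x/n})$ via the chain rule and using $\frac{d}{dx}L(e^{-2\pi x/n})=e^{-2\pi x/n}/x$ gives $\frac{d}{dx}R(e^{-2\pi x})=\Delta(2\pi x)/x$, so that $-x\,\tfrac{d}{dx}(-\pi R(e^{-2\pi x}))=\pi\Delta(2\pi x)$. Combining these identifications yields $\int_0^\infty\sin(xt)\Delta(t)\,dt = f(x)+\pi\Delta(2\pi x)$, which on rearrangement is precisely (2.1) at $a=2\pi$.

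The main obstacle is convergence and the justification of the formal steps, rather than the algebra. In particular, the interchange producing (2.13) relies on the fact that $\sum_n \mu(n)/n$ converges only conditionally and that the rearranged double series $\sum_{n,k}\mu(n)(-1)^k/(xn)^{2k+1}$ must be handled carefully; the termwise differentiation of the zero sum in (2.10) is likewise subtle, since the series over $\rho$ converges only when its terms are grouped by symmetric pairs $\rho,\bar\rho$ with $|\Im\rho|$ bounded. I would handle the first issue by truncating the $n$ sum and appealing to dominated convergence using the exponential decay of $e^{-t/n}$, and the second by importing directly the regime of validity in which Waldvogel's expansion (2.10) is asserted, so that differentiation inside the zero sum is legitimate under the same grouping.
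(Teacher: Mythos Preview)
Your proposal is correct and follows exactly the paper's own argument: the paper's proof consists precisely of the chain (2.12)--(2.15), obtained by computing the sine transform of $\Delta$, expanding geometrically, and then invoking Waldvogel's expansion (2.11) under $-x\,\frac{d}{dx}$. You have in fact spelled out more than the paper does, making explicit the identification of the two pieces of (2.15) with $f(x)$ and $\pi\Delta(2\pi x)$ and flagging the convergence issues that the paper treats only formally.
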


\section{Comments} 
The Fredholm alternative theorem [7, pg.23, Theorem 1.3.4] implies that, if the homogeneous integral equation
\begin{equation}\pi\Delta(2\pi x)=\int_{0}^{\infty}\sin(xt)\Delta(t)dt\end{equation}
has nontrivial solutions (i.e. $\Delta(x) \neq 0$), then (2.1) with $a=2\pi$ has infinitely many solutions or no solution at all. From Titchmarsh [5, pg.23, eq. (2.7.1)--(2.7.3)]
we have that (for $0<\Re(s)<1$)
\begin{equation}\zeta(s)\Gamma(s)=\int_{0}^{\infty}t^{s-1}\left(\frac{1}{e^t-1}-\frac{1}{t}\right)dt,\end{equation}
and for $x>0$
\begin{equation}\frac{1}{e^{2\pi t}-1}-\frac{1}{2\pi t}=\frac{1}{\pi}\int_{0}^{\infty}\sin(xt)\left(\frac{1}{e^t-1}-\frac{1}{t}\right)dt.\end{equation}
We may simply use (3.3) directly. (Alternatively, we may take the Mellin transform of (3.1) over the region $0<\Re(s)<1,$ invoke (3.2), and then find the desired $\Delta(x)$ using the functional equation (1.13).) Therefore, in conjunction with our solution in Theorem 2.1, the Fredholm alternative theorem tells us there are infinitely many solutions of (2.1) when $a=2\pi.$ 

\section{An associated integral} 
Here we offer an interesting integral involving the series in (2.17). 
\begin{theorem}For a positive real number $a,$ \begin{equation}\int_{0}^{\infty}\frac{\{t\}}{t}\left(\sum_{n\ge1}\frac{\mu(n)}{n}e^{-t/(na)}\right)dt=-\frac{1}{2}+\frac{1}{\pi}\int_{0}^{\infty}S(\frac{x}{2\pi a})e^{-x}dx.\end{equation}
\end{theorem}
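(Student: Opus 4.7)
My plan is to reduce both sides to the closed form $\tfrac{1}{2}-\tfrac{1}{\pi}\arctan(1/(2\pi a))$ via Mellin-transform techniques. Two Mellin pairs drive the argument. From Titchmarsh's formula $\zeta(s)=s/(s-1)-s\int_1^\infty\{t\}\,t^{-s-1}\,dt$ [5], a routine split of the integral produces $\int_0^\infty t^{s-1}(\{t\}/t)\,dt=\zeta(1-s)/(s-1)$ in the strip $0<\Re(s)<1$. For the Möbius series $g(t):=\sum_{n\ge1}(\mu(n)/n)e^{-t/(na)}$, termwise Mellin gives $\bar g(s)=a^s\Gamma(s)/\zeta(1-s)$: this is immediate for $\Re(s)<0$ (where $\sum_n\mu(n)/n^{1-s}=1/\zeta(1-s)$ converges absolutely) and extends analytically through the critical strip, with validity as a genuine integral guaranteed by $\sum_n\mu(n)/n=0$ (equivalent to PNT), which forces $g(t)=O(t)$ as $t\to0^+$.

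Applying Mellin--Parseval on the line $\Re(s)=c\in(0,1)$ represents the left side as
\[
\frac{1}{2\pi i}\int_{c-i\infty}^{c+i\infty}\frac{\zeta(1-s)}{s-1}\cdot\frac{a^{1-s}\Gamma(1-s)}{\zeta(s)}\,ds.
\]
Inserting the functional equation (1.13), the reflection formula $\Gamma(s)\Gamma(1-s)=\pi/\sin(\pi s)$, and the double-angle identity $\sin(\pi s)=2\sin(\pi s/2)\cos(\pi s/2)$ collapses the integrand to $\pi a(2\pi a)^{-s}/\bigl[(s-1)\sin(\pi s/2)\bigr]$. Shifting the contour rightward (valid for $2\pi a>1$, since the integrand decays exponentially in $|\Im(s)|$ on vertical lines and as $(2\pi a)^{-\Re(s)}$ horizontally), I collect the residue $\tfrac12$ at $s=1$ and the residues $2a(-1)^k/[(2\pi a)^{2k}(2k-1)]$ at the simple poles $s=2k$ of $1/\sin(\pi s/2)$. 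Summing the latter via the Leibniz expansion $u\arctan(u)=\sum_{k\ge1}(-1)^{k-1}u^{2k}/(2k-1)$ with $u=1/(2\pi a)$ yields $-\tfrac{1}{\pi}\arctan(1/(2\pi a))$, so the LHS equals $\tfrac12-\tfrac{1}{\pi}\arctan(1/(2\pi a))$.

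For the right side, using the integral representation $S(y)=\tfrac{\pi}{2}+\int_y^\infty(\sin u/u)\,du$ together with Fubini,
\[
\frac{1}{\pi}\int_0^\infty S\!\Bigl(\frac{x}{2\pi a}\Bigr)e^{-x}\,dx=\frac{1}{2}+\frac{1}{\pi}\int_0^\infty\frac{\sin u}{u}\bigl(1-e^{-2\pi au}\bigr)\,du=1-\frac{1}{\pi}\arctan\!\Bigl(\frac{1}{2\pi a}\Bigr),
\]
invoking the elementary Laplace transform $\int_0^\infty(\sin u/u)e^{-pu}\,du=\arctan(1/p)$. Combined with the leading $-\tfrac12$ in the theorem's statement, the RHS equals exactly $\tfrac12-\tfrac{1}{\pi}\arctan(1/(2\pi a))$, matching the LHS.

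The principal obstacle is legitimizing the Mellin analysis of $\bar g$ across the critical strip: one must show that $\int_0^\infty t^{s-1}g(t)\,dt$ really does equal $a^s\Gamma(s)/\zeta(1-s)$ in $0<\Re(s)<1$, and in particular that the singularities of $\Gamma(s)$ and $1/\zeta(1-s)$ at $s=0$ conspire to give the finite value $\bar g(0)=-1$; both this cancellation and the convergence of $g$ near $t=0$ ultimately rest on PNT. For the complementary range $2\pi a<1$, I would close the contour leftward through the poles $s=0,-2,-4,\ldots$; the same $\arctan$ series reappears and produces $\arctan(2\pi a)/\pi$, which equals the target value by the complementarity $\arctan(u)+\arctan(1/u)=\pi/2$.
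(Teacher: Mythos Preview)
Your strategy is sound but differs from the paper's, and your write-up contains two sign slips that happen to cancel.

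The paper never computes a closed form. It starts from Ivi\'c's identity $\int_0^\infty(\{t\}/t)e^{-t/a}\,dt=-\tfrac{1}{2\pi i}\int_{(c)}\tfrac{\zeta(s)}{s}a^s\Gamma(s)\,ds$ on $0<c<1$, shifts the line to $-1<b<0$ (collecting the residue at the double pole $s=0$), sums over $n$ with weights $\mu(n)/n$ via the Dirichlet series for $1/\zeta$, and then uses the functional equation to rewrite the surviving Mellin--Barnes integral as $-\tfrac{1}{\pi^2 i}\int_{(b)}\tfrac{\Gamma(1-s)\Gamma(s)\sin(\pi s/2)}{s}(2\pi a)^s\,ds$. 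This is recognised, by Parseval, as $\tfrac{1}{\pi}\int_0^\infty S(x/(2\pi a))e^{-x}\,dx$, giving the right-hand side directly. Your route---Mellin--Parseval applied to $\{t\}/t$ and $g(t)$, followed by a residue evaluation to an explicit $\arctan$, matched against a Laplace computation on the right---is more hands-on and in effect rederives the identity (4.9) that the paper only cites. The paper's argument is shorter and sidesteps the analytic continuation of $\bar g$ through the critical strip that you (rightly) flag, though its own treatment of the $\mu$-sum is equally formal.

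The two sign errors: when you push the contour from $\Re(s)=c\in(0,1)$ to the right, the original integral equals \emph{minus} the sum of the residues in the swept region, so the pole at $s=1$ contributes $-\tfrac12$ and the poles at $s=2k$ contribute $+\tfrac{1}{\pi}\arctan(1/(2\pi a))$. Separately, the sine integral satisfies $S(y)=\tfrac{\pi}{2}-\int_y^\infty(\sin u/u)\,du$, not with a plus sign; with the correct sign your Fubini step gives $\tfrac{1}{\pi}\int_0^\infty S(x/(2\pi a))e^{-x}\,dx=\tfrac{1}{\pi}\arctan(1/(2\pi a))$, consistent with (4.9). Both sides therefore equal $-\tfrac12+\tfrac{1}{\pi}\arctan(1/(2\pi a))$, the negative of the value you state; the theorem itself is unaffected since the errors match.
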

\begin{proof} From Ivic [3, eq.(13)], we have for $0<c<1,$
\begin{equation}\int_{0}^{\infty}\frac{\{t\}}{t}e^{-t/a}dt=-\frac{1}{2\pi i}\int_{c-i\infty}^{c+i\infty}\frac{\zeta(s)}{s}a^s\Gamma(s)ds,\end{equation}
where $\{t\}$ denotes the fractional part of $t.$ We move the line of integration of the integral on the right side of (4.2) to the left $\Re(s)=b,$ $-1<b<0,$ and compute the residue from the pole of order two at $s=0.$ We use $\zeta(1)^{-1}=0,$ $\frac{d}{ds}\zeta(s)^{-1}|_{s=0}=-1,$ and $\Gamma(1)=-\gamma$ to get
\begin{equation}-\frac{1}{2\pi i}\int_{c-i\infty}^{c+i\infty}\frac{\zeta(s)}{s}a^s\Gamma(s)ds=\frac{1}{2}\log(a)-\frac{1}{2}\gamma+\frac{1}{2}\log(2\pi)-\frac{1}{2\pi i}\int_{b-i\infty}^{b+i\infty}\frac{\zeta(s)}{s}a^s\Gamma(s)ds.\end{equation}
Using uniform convergence of $\zeta(v)^{-1}=\sum_{n\ge1}\mu(n)/n^v$ for $\Re(v)\ge1,$ we have
\begin{equation}\int_{0}^{\infty}\frac{\{t\}}{t}\left(\sum_{n\ge1}\frac{\mu(n)}{n}e^{-t/(na)}\right)dt=-\frac{1}{2}-\frac{1}{2\pi i}\int_{b-i\infty}^{b+i\infty}\frac{\zeta(s)}{s\zeta(1-s)}a^s\Gamma(s)ds,\end{equation}
 for $-1<b<0.$ Using (1.13) we write the right side of (4.4) as 
\begin{equation}-\frac{1}{2\pi i}\int_{b-i\infty}^{b+i\infty}\frac{\zeta(s)}{s\zeta(1-s)}a^s\Gamma(s)ds=-\frac{1}{2\pi^2 i}\int_{b-i\infty}^{b+i\infty}\frac{\Gamma(1-s)\Gamma(s)\sin(\frac{\pi}{2}s)}{s}(2\pi a)^sds.\end{equation}
If $S(x)$ denotes the sine integral, then we have by Parseval's theorem
\begin{equation} \frac{1}{\pi}\int_{0}^{\infty}S(\frac{x}{2\pi a})e^{-x}x^{s-1}dx=-\frac{1}{2\pi^2 i}\int_{c_1-i\infty}^{c_1+i\infty}\frac{\Gamma(s-r)\Gamma(r)\sin(\frac{\pi}{2}r)}{r}(2\pi a)^rdr,\end{equation}
where $c_1$ is a real number confined to the strip of holomorphy when both $0>c_1>-1$ and $(s-c_1)>0.$ We put $s=1$ in (4.6) and note that we have set $-1<c_1<0.$ Comparing our resulting integrals gives the result.\end{proof}
If we instead work with the integral 
\begin{equation}-\frac{1}{2\pi i}\int_{c'-i\infty}^{c'+i\infty}\frac{\zeta(s)}{s}z^{s}ds=\{z\}-\frac{1}{2},\end{equation}
valid for $-1<c'<0,$ we may deduce a slightly different form of Theorem 4.1.
\begin{theorem} For a positive real number $a,$ \begin{equation}\int_{0}^{\infty}\frac{\{t\}-\frac{1}{2}}{t}\left(\sum_{n\ge1}\frac{\mu(n)}{n}(e^{-t/(na)}-1)\right)dt=\frac{1}{\pi}\int_{0}^{\infty}S(\frac{x}{2\pi a})e^{-x}dx.\end{equation}
\end{theorem}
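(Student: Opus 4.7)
The plan is to parallel the proof of Theorem 4.1, but to invoke (4.7) directly so that the centered fractional part $\{t\}-\tfrac{1}{2}$ is represented by a contour on a line lying to the left of the pole of $\zeta(s)/s$ at $s=0$. No residue then needs to be extracted, which is exactly why the boundary terms $-\tfrac{1}{2}$ and $\tfrac{1}{2}\log(a)-\tfrac{1}{2}\gamma+\tfrac{1}{2}\log(2\pi)$ of Theorem 4.1 do not reappear in (4.8).

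First I would insert
$$\{t\}-\tfrac{1}{2}=-\frac{1}{2\pi i}\int_{c'-i\infty}^{c'+i\infty}\frac{\zeta(s)}{s}t^{s}\,ds,\qquad -1<c'<0,$$
into the left side of (4.8) and interchange the contour with both the $t$-integration and the $n$-summation. The inner $t$-integral is the analytic continuation of the Gamma-function Mellin transform,
$$\int_{0}^{\infty}t^{s-1}(e^{-t/(na)}-1)\,dt=(na)^{s}\Gamma(s),\qquad -1<\Re(s)<0,$$
while the $n$-sum collapses via $\sum_{n\ge 1}\mu(n)/n^{1-s}=1/\zeta(1-s)$, which converges absolutely on $\Re(s)=c'<0$. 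The left side of (4.8) therefore equals
$$-\frac{1}{2\pi i}\int_{c'-i\infty}^{c'+i\infty}\frac{\zeta(s)\Gamma(s)}{s\,\zeta(1-s)}\,a^{s}\,ds,$$
which is precisely the contour integral appearing on the right side of (4.4). Applying the functional equation (1.13) exactly as in the passage (4.4)$\to$(4.5) converts this to
$$-\frac{1}{2\pi^{2}i}\int_{c'-i\infty}^{c'+i\infty}\frac{\Gamma(1-s)\Gamma(s)\sin(\tfrac{\pi}{2}s)}{s}(2\pi a)^{s}\,ds,$$
and setting $s=1$ with $c_{1}=c'$ in the Parseval identity (4.6) identifies the latter with $\frac{1}{\pi}\int_{0}^{\infty}S(x/(2\pi a))e^{-x}\,dx$, yielding (4.8).

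The main obstacle is justifying the two interchanges of summation/integration with the contour. This rests on the exponential decay $|\Gamma(\sigma+i\tau)|\sim\sqrt{2\pi}|\tau|^{\sigma-1/2}e^{-\pi|\tau|/2}$ on vertical lines, the polynomial bounds on $\zeta(s)$ and $1/\zeta(1-s)$ on $\Re(s)=c'$, and the integrability of the outer integrand on $(0,\infty)$. For the latter, a Taylor expansion near $t=0$ combined with $\sum_{n\ge 1}\mu(n)/n^{2}=1/\zeta(2)$ shows boundedness of the integrand at the origin, while $\sum_{n\ge 1}\mu(n)/n=0$ gives the decay of $\sum_{n\ge 1}(\mu(n)/n)(e^{-t/(na)}-1)$ as $t\to\infty$, so that Fubini applies throughout.
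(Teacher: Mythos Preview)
Your proposal is correct and follows precisely the route the paper indicates: it replaces the starting identity (4.2) by (4.7), so the contour already sits in $-1<\Re(s)<0$ and no residue at $s=0$ has to be subtracted, after which the computation merges with (4.4)--(4.6) verbatim. The paper gives only the one-line hint ``work with the integral (4.7)''; you have supplied exactly the details that hint asks for, at the same level of rigor as the proof of Theorem~4.1.
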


We further note that it is known [2], and may be computed using the residue theorem from the integral on the right side of (4.4), that
\begin{equation}\int_{0}^{\infty}e^{-at}S(t)dt=\frac{1}{a}\tan^{-1}(\frac{1}{a}).\end{equation}

1390 Bumps River Rd. \\*
Centerville, MA
02632 \\*
USA \\*
E-mail: alexpatk@hotmail.com
\end{document}